\newtheorem{theorem}{Theorem}[section]
\newtheorem{proposition}[theorem]{Proposition}
\newtheorem{corollary}[theorem]{Corollary}
\newtheorem{lemma}[theorem]{Lemma}
\newtheorem{conjecture}[theorem]{Conjecture}
\theoremstyle{definition}
\newtheorem{definition}[theorem]{Definition}
\newtheorem{claim}[theorem]{Claim}
\newtheorem{remark}[theorem]{Remark}
\newcommand{\C}{\mathcal C}
\newcommand{\IR}{\mathbb R}
\newcommand{\IN}{\mathbb N}
\newcommand{\II}{\mathbb I}
\newcommand{\w}{\omega}
\newcommand{\cbox}{\operatornamewithlimits{\boxdot}}
\newcommand{\U}{\mathcal U}
\newcommand{\e}{\varepsilon}
\newcommand{\id}{\mathrm{id}}
\newcommand{\V}{\mathcal V}
\newcommand{\pr}{\mathrm{pr}}
\newcommand{\Ra}{\Rightarrow}
\newcommand{\ulim}{\mathrm u\mbox{-}\kern-2pt\varinjlim}
\newcommand{\tlim}{\mathrm t\mbox{-}\kern-2pt\varinjlim}
\newcommand{\lclim}{\mathrm{lc}\mbox{-}\kern-2pt\varinjlim}
\newcommand{\glim}{\mathrm{g}\mbox{-}\kern-2pt\varinjlim}
\newcommand{\lz}{lz}
\title[A topological characterization of $LF$-spaces]{A topological characterization of $LF$-spaces}
\author[T.~Banakh and D.~Repov\v s]{Taras Banakh and Du\v san Repov\v s}
\subjclass[2010]{Primary: 57N17; 54F65; 46A13; Secondary: 22A05; 54B10; 54B30; 54E15; 54H11}
\keywords{$LF$-space, uniform space, topological direct limit, uniform direct limit, small box-product}
\thanks{This research was supported by Slovenian Research Agency grants P1-0292-0101, J1-2057-0101 and BI-UA/09-10-005.}
\address{Department of Mathematics, Ivan Franko National University of Lviv, and\newline
Instytut Matematyki, Jan Kochanowski University in Kielce, Poland}
\email{t.o.banakh@gmail.com}
\address{Faculty of Mathematics and Physics, and
Faculty of Education,
University of Ljubljana,
P. O. Box 2964,
Ljubljana, Slovenia 1001}
\email{dusan.repovs@guest.arnes.si}
\begin{document}
\begin{abstract} We present a topological characterization of LF-spaces and detect small box-products that are (locally) homeomorphic to LF-spaces.
\end{abstract}

\date{\today}
\maketitle

\section{Introduction}

In this paper we shall present a simple criterion for recognizing topological spaces that are homeomorphic to (open subspaces of) $LF$-spaces. This criterion has been applied in \cite{BMRSY}, \cite{BMSY} and \cite{BY} for detecting topological groups that are homeomorphic to (open subspaces of) LF-spaces.

We recall that an {\em $LF$-space} is the direct limit $\lclim X_n$ of an increasing sequence $$X_0\subset X_1\subset X_2\subset\cdots$$
of Fr\'echet (= locally convex complete linear metric) spaces in the category of locally convex spaces.
The simplest example of a non-metrizable $LF$-space is the inductive limit
$\IR^\infty =\lclim \IR^n$ of the sequence
$$\IR\subset \IR^2\subset\IR^3\subset\cdots$$
of Euclidean spaces, where each space $\IR^n$ is identified with the hyperplane $\IR^n\times\{0\}$ in $\IR^{n+1}$. The space $\IR^\infty$ is topologically isomorphic to the direct sum $\bigoplus_{n\in\w}\IR$ of one-dimensional Fr\'echet spaces in the category of locally convex spaces.

Mankiewicz~\cite{Man}  obtained a topological classification of $LF$-spaces and proved that each $LF$-space is homeomorphic to the direct sum $\bigoplus_{n\in\w}l_2(\kappa_i)$ of Hilbert spaces for some sequence $(\kappa_i)_{i\in\w}$ of cardinals. Here $l_2(\kappa)$ stands for the Hilbert space with orthonormal base of cardinality $\kappa$. In particular, $l_2(n)=\IR^n$ for a finite cardinal $n$. A more precise version of  Mankiewicz's classification says that the spaces
\begin{itemize}
\item $l_2(\kappa)$ for some cardinal $\kappa\ge 0$,
\item $\IR^\infty$,
\item $l_2(\kappa)\times\IR^\infty$ for some $\kappa\ge\w$, and
\item $\bigoplus_{n\in\w}l_2(\kappa_i)$ for a strictly increasing sequence of infinite cardinals $(\kappa_i)_{i\in\w}$
\end{itemize}
are pairwise non-homeomorphic and represent all possible topological types of $LF$-spaces. In particular, each infinite-dimensional separable $LF$-space is homeomorphic to one of the following spaces: $l_2$, $\IR^\infty$ or $l_2\times\IR^\infty$.

The topological characterizations of the LF-spaces $l_2$ and $\IR^\infty$ were given by
Toru\'nczyk \cite{Tor81}, \cite{Tor85} and
Sakai \cite{Sak84}, respectively. These
characterizations belong to
the best achievements of the classical infinite-dimensional topology. In this paper we shall present a topological characterization of other $LF$-spaces, in particular, $l_2\times\IR^\infty$.
First, we recall the Sakai's topological characterization of the LF-space $\IR^\infty$. This characterization is based on the observation that the $LF$-space $\IR^\infty=\lclim\IR^n$ carries the topology of the topological direct limit of the tower $(\IR^n)_{n\in\w}$ of finite-dimensional Euclidean spaces.

By the {\em topological direct limit} $\tlim X_n$ of a tower
$$X_0\subset X_1\subset X_2\subset\cdots
$$of topological spaces we understand the union $X=\bigcup_{n\in\w}X_n$ endowed with the largest topology turning the identity inclusions $X_n\to X$, $n\in\w$, into continuous maps.

\begin{theorem}[Sakai]\label{sakai} A topological space $X$ is homeomorphic to \textup{(}an open subspace of\textup{)} the space $\IR^\infty$ if and only if
\begin{enumerate}
\item $X$ is homeomorphic to the topological direct limit\/ $\tlim X_n$ of a tower $(X_n)_{n\in\w}$ of finite-dimensional metrizable compacta and
\item each embedding $f:B\to X$ of a closed subset $B\subset A$ of a finite-dimensional metrizable compact space $A$ extends to an embedding of \textup{(}an open neighborhood of $B$ in\textup{)} the space $A$ into $X$.
\end{enumerate}
\end{theorem}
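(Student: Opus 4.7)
The plan is to split the argument into two directions: necessity (showing $\IR^\infty$ satisfies conditions (1) and (2)) and sufficiency (using those conditions to build a homeomorphism $X \cong \IR^\infty$ by a back-and-forth construction).

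For necessity I would set $D_n = [-n,n]^n \subset \IR^n$, so that $\IR^\infty = \bigcup_n D_n$. Condition (1) then asserts that the LF-topology coincides with $\tlim D_n$: a set $U$ is open in the LF-topology iff $U \cap \IR^n$ is open in $\IR^n$ for every $n$, iff $U \cap D_n$ is open in $D_n$ for every $n$, the second equivalence using the $\sigma$-compactness of each $\IR^n$ and standard coherence-of-topologies arguments. For condition (2) I would take an embedding $f : B \hookrightarrow \IR^\infty$ of a closed subset $B$ of a finite-dimensional compactum $C$. By compactness, $f(B) \subset \IR^N$ for some $N$; by Tietze extend $f$ to a map $\tilde f : C \to \IR^N$; by Menger--N\"obeling pick an embedding $e : C \hookrightarrow S^{M-1} \subset \IR^M$ for sufficiently large $M$; and let $t : C \to [0,1]$ be a Urysohn function vanishing precisely on $B$. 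The map $\bar f(c) = (\tilde f(c),\, t(c)\, e(c)) \in \IR^{N+M}$ then extends $f$, and a short case analysis (using that $0 \notin e(C)$) shows $\bar f$ is injective, hence a closed embedding since its domain is compact.

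For sufficiency, given $X = \tlim X_n$ satisfying (1)--(2), I would construct a homeomorphism $h : X \to \IR^\infty$ by back-and-forth. Inductively I pick strictly increasing indices $k_0 < k_1 < \cdots$ and $l_0 < l_1 < \cdots$, together with closed embeddings $f_j : X_{k_j} \hookrightarrow D_{l_j}$ and $g_j : D_{l_j} \hookrightarrow X_{k_{j+1}}$, subject to the coherence relations $g_j \circ f_j = \mathrm{incl}_{X_{k_j} \subset X_{k_{j+1}}}$ and $f_{j+1} \circ g_j = \mathrm{incl}_{D_{l_j} \subset D_{l_{j+1}}}$. The base case uses Menger--N\"obeling to embed $X_0$ into some $D_{l_0}$. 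Given $f_j$, the map $f_j^{-1} : f_j(X_{k_j}) \to X$ is an embedding of a closed subset of the finite-dimensional compactum $D_{l_j}$, so hypothesis (2) for $X$ extends it to an embedding $g_j : D_{l_j} \hookrightarrow X$, which by compactness lands in some $X_{k_{j+1}}$. Given $g_j$, the map $g_j^{-1}$ is an embedding of a closed subset of the finite-dimensional compactum $X_{k_{j+1}}$ into $\IR^\infty$, so the extension property just established for $\IR^\infty$ extends it to an embedding $f_{j+1} : X_{k_{j+1}} \hookrightarrow \IR^\infty$, which by compactness lands in some $D_{l_{j+1}}$. The coherence relations force $f_{j+1}|_{X_{k_j}} = f_j$ and $g_{j+1}|_{D_{l_j}} = g_j$, so the systems $(f_j)$ and $(g_j)$ assemble into mutually inverse continuous maps $X \to \IR^\infty$ and $\IR^\infty \to X$, hence a homeomorphism by the universal property of the topological direct limit.

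The main obstacle is that the back-and-forth must produce $f_j$ and $g_j$ genuinely extending the \emph{inverses} of their predecessors, so that in the limit the two families are mutually inverse; this is exactly what the pair of extension properties (the assumed one (2) for $X$ and its counterpart proved for $\IR^\infty$) delivers at each step. A secondary technical point I would address is the cofinality of the subsequences $(X_{k_j})$ inside $(X_n)$ and $(D_{l_j})$ inside $(D_n)$, which is needed to ensure that the resulting bijection is a homeomorphism with respect to the original direct-limit topologies and not just with respect to potentially weaker topologies coming from the subtowers.
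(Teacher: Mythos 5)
The paper does not actually prove this statement: it is quoted as Sakai's characterization of $\IR^\infty$ and cited to \cite{Sak84} without proof, so there is no internal argument to compare yours against. On its own merits your proposal is correct and is essentially the standard proof. The necessity half is fine: the identification $\IR^\infty=\tlim[-n,n]^n$ follows from the coincidence of the LF- and $\tlim$-topologies on $\IR^\infty$ (which the paper records just before the theorem), and your extension $\bar f(c)=(\tilde f(c),t(c)e(c))$ with $e(C)\subset S^{M-1}$ is injective by the norm argument you indicate. The back-and-forth in the sufficiency half is the same scheme the paper itself uses for its Uniqueness Theorem~\ref{unique}, and the coherence relations do propagate as you claim. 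Two small points you use implicitly and should make explicit: (i) every compact subset of $\tlim X_n$ (and of $\tlim D_n$) lies in some stage of the tower --- this needs the tower to be closed, which holds here since compacta are closed in Hausdorff spaces, and it is what lets you say $g_j$ and $f_{j+1}$ ``land in'' some $X_{k_{j+1}}$ or $D_{l_{j+1}}$; and (ii) the continuity of the limit maps $f$ and $g$ requires that a map out of $\tlim X_n$ is continuous iff its restriction to each $X_{k_j}$ is, for the cofinal subtower --- strict monotonicity of $(k_j)$ and $(l_j)$ gives cofinality, and cofinal subtowers of closed towers induce the same direct-limit topology. With those remarks inserted, the argument is complete.
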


Deleting the adjective ``finite-dimensional'' from this characterization, we obtain a characterization of (open subspaces of) the space $Q\times\IR^\infty$ where $Q=[0,1]^\w$ is the Hilbert cube, see \cite{Sak84}.

Replacing the class of finite-dimensional compact metrizable  spaces in Theorem~\ref{sakai} by the class  of Polish spaces, E.~Pentsak~\cite{Pen} obtained a topological characterization of (open subspaces of) the topological direct limit $\tlim (l_2)^n$ of the tower of Hilbert spaces $$l_2\subset l_2\times l_2\subset\dots\subset l_2^n\subset\cdots,$$ where each space $l_2^n$ is identified with the subspace $l_2^n\times\{0\}$ of the Hilbert space $l_2^{n+1}$. However, the topology of the topological direct limit $\tlim l_2^n$ is strictly stronger that the topology of the direct limit $\lclim l_2^n$ of that tower in the category of locally convex spaces. Moreover, $\tlim l_2^n$ is not even homeomorphic to a topological group, see
\cite{Ba98}. In fact, an $LF$-space $X$ is homeomorphic to the topological direct limit of a tower of metrizable spaces if and only if $X$ is either metrizable or is topologically isomorphic to $\IR^\infty$, see \cite{BZ} and \cite{Ba98a}.

This means that topological direct limits cannot be used for describing the topology of non-metrizable $LF$-spaces which are different from $\IR^\infty$. On the other hand, it was discovered in
\cite{BR10} that for any tower $(X_n)_{n\in\w}$ of Fr\'echet spaces the topology of the $LF$-space $X=\lclim X_n$ coincides with the topology of the direct limit $\ulim X_n$ of this tower in the category of uniform spaces!

By the {\em uniform direct limit} $\ulim X_n$ of a tower
$$X_0\subset X_1\subset X_2\subset\cdots$$
of uniform spaces we understand the union $X=\bigcup_{n\in\w}X_n$ endowed with the largest uniformity turning the identity inclusions $X_n\to X$ into uniformly continuous maps.
Each linear topological space $L$ carries the canonical uniformity generated by the entourages $\{(x,y)\in L:x-y\in U\}$ where $U=-U$ runs over all symmetric neighborhoods of the origin of $L$.

For any tower $(X_n)_{n\in\w}$ of Fr\'echet spaces the identity map $\ulim X_n\to\lclim X_n$ is continuous (because each continuous linear operator is uniformly continuous). A less trivial fact established in \cite{BR10} is the continuity of the inverse map $\lclim X_n\to\ulim X_n$. This means that we can identify $LF$-spaces with  uniform direct limits of Fr\'echet spaces and reduce the problem of topological characterization of $LF$-spaces to the problem of recognizing uniform direct limits that are homeomorphic to $LF$-spaces. The answer to this problem will be given in
Theorems~\ref{t:udl-cbox} and \ref{t:sbox} after some definitions.

All spaces considered in this paper are completely regular and all maps are continuous. On the other hand, functions need not be continuous. A {\em pointed space} is a space $X$ with a distinguished point, which will be denoted by $*_X$.

 The {\em small box-product} of a sequence of pointed topological spaces $(X_n)_{n\in\w}$ is the subspace
$$\cbox_{n\in\w}X_n=\{(x_n)_{n\in\w}\in\square_{n\in\w}X_n:\exists m\in\w\;\forall n\ge m\;\;x_n=*_{X_n}\}$$of the box product $\square_{n\in\w}X_n$. The latter space is the Tychonov product $\prod_{n\in\w}X_n$ endowed with the topology generated by the products $\prod_{n\in\w}U_n$ of open subsets $U_n\subset X_n$, $n\in\w$.
For a subset $A\subset\w$ let
$$\cbox_{n\in A}X_n=\{(x_n)_{n\in\w}\in\cbox_{n\in\w}X_n:\{n\in\w:x_n\ne *_{X_n}\}\subset A\}\subset\cbox_{n\in\w}X_n.$$
It follows that $\cbox_{n\in\w}X_n=\bigcup_{n\in\w}\cbox_{i\le n}X_i$.
By Proposition 5.3 of \cite{BR10}, for any sequence $(X_n)_{n\in\w}$ of locally convex linear topological spaces the topology of the small box-product $\cbox_{n\in\w}X_n$ coincides with the topology of the direct sum $\bigoplus_{n\in\w}X_n$ in the category of locally convex linear topological spaces.

For a uniform space $X$ its uniformity will be denoted by $\U_X$. Elements of the uniformity $\U_X$ are called {\em entourages}. The Hausdorff property of $X$ implies that $\cap\U_X=\{(x,x):x\in X\}$. A uniform space $X$ is called {\em metrizable} if its uniformity is generated by a metric.
For a point $a\in X$, a subset $A\subset X$, and an entourage $U\in\U_X$, let $B(a,U)=\{x\in X:(x,a)\in U\}$ and $B(A,U)=\bigcup_{a\in A}B(a,U)$ be the $U$-neighborhoods of $a$ and $A$, respectively. A neighborhood $O(A)$ of $A$ in $X$ is called {\em uniform} if $O(A)$ contains the
$U$-neighborhood $B(A,U)$ for some entourage $U\in\U_X$.

\begin{definition}\label{d:comp} Let $C$ be a pointed topological space. A subset $A$ of a uniform space $X$ is called {\em $C$-complemented} in $X$ if there is a homeomorphism $\gamma:A\times C\to X$ such that
\begin{enumerate}
\item for any neighborhood $V\subset C$ of $*_C$ there is an entourage $U\in\U_X$ such that $B(A,U)\subset\gamma(A\times V)$;
\item for any entourage $U\in\U_X$ there is a neighborhood $V\subset C$ of $*_C$ such that  $\gamma(\{a\}\times V)\subset B(a,U)$ for each $a\in A$.
\end{enumerate}
A subset $A$ of $X$ is called {\em locally $C$-complemented} if for some open neighborhood $V\subset C$ of $*_C$ the set $A$ is $V$-complemented in some open uniform neighborhood $U(A)$ of $A$ in $X$.
\end{definition}

The following theorem shows that often uniform direct limits are (locally) homeomorphic to small box-products. We shall say that a topological space $X$ is {\em locally homeomorphic} to a topological space $Y$ if each point $x\in X$ has an open neighborhood $O_x\subset X$ which is homeomorphic to an open subspace of $Y$.

\begin{theorem}\label{t:udl-cbox} Let $(X_n)_{n\in\w}$ be a tower of uniform spaces such that for every $n\in\w$ the space $X_n$ is (locally) $C_n$-complemented in $X_{n+1}$ for some pointed topological space $C_n$ and $X_n$ is (locally) homeomorphic to the product $X_0\times\cbox_{i<n}C_i$. Then the uniform direct limit $\ulim X_n$ is (locally) homeomorphic to the small box-product $X_0\times\cbox_{n\in\w}C_n$.
\end{theorem}

In light of Theorem~\ref{t:udl-cbox} it is important to recognize small box-products that are (locally) homeomorphic to LF-spaces.
In this respect we have the following:

\begin{conjecture}\label{conj} The small box-product $\cbox_{n\in\w}X_n$ of pointed topological spaces is homeomorphic to (an open subset of) an LF-space if for every $n\in\w$ the finite product $\prod_{i\le n}X_i$ is homeomorphic to (an open subset of) a Hilbert space.
\end{conjecture}

We shall confirm this conjecture under an additional assumption that for infinitely many numbers $n\in\w$ the space $X_n$ is $\lz$-pointed. The definition of an $\lz$-pointed space involves the notion of a strong $Z$-set, well-known in Infinite-Dimensional Topology, see \cite{BM}, \cite[\S 1.4]{BRZ}, \cite[\S2.2]{Chi}.

We recall that a closed subset $A$ of a topological space $X$ is a ({\em strong}) {\em $Z$-set} in $X$ if for any open cover $\U$ of $X$ there is a map $f:X\to X$ such that $f$ is $\U$-near to the identity map $\id_X$ of $X$ and (the closure $\overline{f(X)}$ of) the set $f(X)$ does not intersect $A$. Two maps $f,g:X\to X$ are called {\em $\U$-near} if for each point $x\in X$ the doubleton $\{f(x),g(x)\}$ lies in some set $U\in\U$.
It is clear that each strong $Z$-set is a $Z$-set. The converse is not true, see \cite{BBMW}. However, in Hilbert spaces each $Z$-set is a strong $Z$-set, see \cite{BBMW}, \cite{Tor85}.
A point $x_0$ of a space $X$ will be called a {\em strong $Z$-point} in $X$ is the singleton $\{x_0\}$ is a strong $Z$-set in $X$.

A pointed space $X$ will be called
\begin{itemize}
\item {\em $l$-pointed} if $*_X$ is not isolated and $X$ is locally compact;
\item {\em $z$-pointed} if $*_X$ is a strong $Z$-point in $X$;
\item {\em $\lz$-pointed} if $X$ is $l$-pointed or $z$-pointed.
\end{itemize} For example, each non-trivial Hilbert space $H$ with distinguished point $0$ is an $\lz$-pointed space. More precisely, $H$ is $l$-pointed if $0<\dim(H)<\infty$ and $H$ is $z$-pointed if $\dim(H)=\infty$.

The following theorem (that will be proved in Section~\ref{s:pfsbox})  confirms Conjecture~\ref{conj} for small box-products of $\lz$-pointed spaces.

\begin{theorem}\label{t:sbox} The small box-product $\cbox_{n\in\w}X_n$ of pointed topological spaces $X_n$ is homeomorphic to (an open subset of) an LF-space if for every $n\in\w$ the finite product $\prod_{i\le n}X_i$ is homeomorphic to (an open subset of) a Hilbert space and for infinitely many numbers $n\in\w$ the space $X_n$ is $\lz$-pointed.
\end{theorem}

A subset $A$ of a uniform space $X$ will be called ({\em locally}) {\em $\lz$-complemented} if $A$ is (locally) $C$-complemented in $X$ for some $\lz$-pointed space $C$. By analogy we define (locally) $z$-complemented subsets of uniform spaces.

Theorems~\ref{t:udl-cbox} and \ref{t:sbox} imply the following criterion.

\begin{theorem}\label{t:udl} The uniform direct limit $\ulim X_n$ of a tower of uniform spaces $(X_n)_{n\in\w}$ is
\begin{enumerate}
\item homeomorphic to (an open subset of) an LF-space if for every $n\in\w$ the space $X_n$ is homeomorphic to (an open subset of) a Hilbert space and $X_n$ is $\lz$-complemented in $X_{n+1}$;
\item (locally) homeomorphic to an LF-space if for every $n\in\w$ the space $X_n$ is (locally) homeomorphic to a Hilbert space and $X_n$ is (locally) $\lz$-complemented in $X_{n+1}$.
\item homeomorphic to an LF-space if for every $n\in\w$ the uniform space  $X_n$ is metrizable, is homeomorphic to a Hilbert space and $X_n$ is locally $z$-complemented in $X_{n+1}$.
\end{enumerate}
\end{theorem}

The last statement of this theorem does not follow from Theorems~\ref{t:udl-cbox} and \ref{t:sbox}. It will be proved in a more general context of typical model spaces in Theorem~\ref{t:tudl}.
Because of the lack of the Open Embedding Theorem for LF-manifolds, we distinguish between LF-manifolds and open subspaces of LF-spaces. That is why we included two separate items (1) and (2) in Theorem~\ref{t:udl}. It should be mentioned that  the topological structure of open subspaces of LF-spaces is quite well understood, which cannot be said about LF-manifolds, see \cite{MS}, \cite{MS2}.

Theorem~\ref{t:udl-cbox} will be proved in Section~\ref{s:pfudl}.
In Section~\ref{s:udl} we recall the necessary information on uniform direct limits. In Section~\ref{s:rp} we study reduced products of pointed spaces and prove an important Lemma~\ref{l:rp} on regular homeomorphisms of pairs. In Section~\ref{s:tms} we introduce the notion of a typical model space so that manifolds modeled on such spaces have many common properties with Hilbert manifolds. In Section~\ref{s:comp} we shall prove two lemmas about complemented subsets in metrizable uniform spaces that are homeomorphic to typical model spaces.
 In Section~\ref{s:sbplc} we study small box-product of locally compact spaces and show that for any sequence $(X_i)_{i\in\w}$ of locally compact ANR-spaces the small box-product $Q\times\cbox_{i\in\w}X_i$ is locally homeomorphic to $Q\times\IR^\infty$ where $Q=[0,1]^\w$ is the Hilbert cube. In Section~\ref{s:tsbp} we apply this result to recognize the small box-products that are (locally) homeomorphic to small box-products $\cbox_{n\in\w}E_n$ of typical model spaces. In Section~\ref{s:tudl} we apply the results about small box-products and prove a criterion for recognizing uniform direct limits that are (locally) homeomorphic to small box-products $\cbox_{n\in\w}E_n$ of typical model spaces.

\section{Uniform direct limits}\label{s:udl}

In this section we recall the necessary information on uniform direct limits.
By a {\em tower} of uniform spaces we shall understand an increasing sequence $$X_0\subset X_1\subset X_2\subset\cdots
$$of uniform spaces (so, the uniformity of each space $X_n$ coincides with the uniformity inherited from the uniform space $X_{n+1}$).

The {\em uniform direct limit} $\ulim X_n$ of a tower of uniform spaces $(X_n)_{n\in\w}$ is the union $X=\bigcup_{n\in\w}X_n$ endowed with the largest uniformity making the identity inclusions $X_n\to X$, $n\in\w$,  uniformly continuous. The topology and the uniformity of the uniform direct limits $\ulim X_n$ has been described in \cite{BR10}.

By Proposition 5.4 of \cite{BR10}, for a tower $(X_n)_{n\in\w}$ of locally compact uniform spaces the identity map $\tlim X_n\to\ulim X_n$ is a homeomorphism. This means that the topology of uniform direct limit on $\bigcup_{n\in\w}X_n$ coincides with the topology of topological direct limit.

A map $f:X\to Y$ between uniform spaces is called {\em regular at a subset} $A\subset X$ if for any entourages $U\in\U_Y$ and $V\in\U_X$ there is an entourage $W\in\U_X$ such that for each point $x\in B(A,W)$ there is a point $a\in A$ such that $(x,a)\in V$ and $(f(x),f(a))\in U$.

The following criterion for recognizing continuous maps between uniform direct limits was proved in Theorem~1.6 of \cite{BR10}.

\begin{proposition}\label{p1.1} Let $(X_n)_{n\in\w}$ be a tower of uniform spaces and $X=\ulim X_n$ be its uniform direct limit. A function $f:X\to Y$ from $X$ to a uniform space $Y$ is continuous provided that for every $n\in\IN$ the restriction $f|X_n:X_n\to Y$ is continuous and regular at $X_{n-1}$.
\end{proposition}

Let $X,Y$ be uniform spaces and $X_0\subset X$, $Y_0\subset X$ be subspaces. A homeomorphism of pairs $h:(X,X_0)\to (Y,Y_0)$ is called {\em regular} if $h(X_0)=Y_0$, $h$ is regular at $X_0$ and $h^{-1}$ is regular at $Y_0$.

Proposition~\ref{p1.1} implies a simple criterion for recognizing homeomorphisms between uniform direct limits.

\begin{corollary}\label{c2.1} Let $(X_n)_{n\in\w}$ and $(Y_n)_{n\in\w}$ be towers of uniform spaces. A bijective function $h:\ulim X_n\to \ulim Y_n$ is a homeomorphism if for every $n\in\w$ the restriction $h|X_n$ is a regular homeomorphism of the pairs $(X_{n+1},X_n)$ and $(Y_{n+1},Y_n)$.
\end{corollary}

We shall often use the following fact established in Proposition 5.5 of \cite{BR10}.

\begin{proposition}\label{p2.3} For a sequence $(X_n)_{n\in\w}$ of pointed uniform spaces the identity map $\ulim \cbox_{i\le n}X_i\to\cbox_{n\in\w}X_n$ is a homeomorphism.
\end{proposition}

\section{Proof of Theorem~\ref{t:udl-cbox}}\label{s:pfudl}

We shall divide the proof of Theorem~\ref{t:udl-cbox} into three lemmas.

\begin{lemma}\label{l1n} Let $(X_n)_{n\in\w}$ be a tower of uniform spaces such that for every $n\in\w$ the space $X_n$ is locally $C_n$-complemented in $X_{n+1}$ for some pointed space $C_n$. Then the set $X_0$ has an open neighborhood $U\subset\ulim X_n$ that is homeomorphic to the small box-product $X_0\times \cbox_{n\in\w}W_n$ for some open neighborhoods $W_n\subset C_n$ of the distinguished points $*_{C_n}$.
\end{lemma}

\begin{proof} For every $n\in\w$ the set $X_n$ is locally $C_n$-complemented in $X_{n+1}$. Consequently, for some open neighborhood $W_n\subset C_n$ of $*_{C_n}$ there is an  open embedding $\gamma_n:X_n\times W_n\to X_{n+1}$ such that
\begin{enumerate}
\item[$(\Gamma_1)$] for any neighborhood $V\subset W_n$ of $*_C$ there is an entourage $U\in\U_{X_{n+1}}$ such that $B(X_n,U)\subset \gamma_n(X_n\times V)$;
\item[$(\Gamma_2)$] for each entourage $U\in\U_{X_{n+1}}$ there is a neighborhood $V\subset W_{n}$ of  $*_{C_n}$ such that for any $x\in X_n$ and $c\in V$ we get $\gamma_n(x,c)\in B(x,U)$.
\end{enumerate}
The condition $(\Gamma_2)$ implies that $\gamma_n(x,*_{C_n})=x$ for all $x\in X_n$.

If the set $X_n$ is $C_n$-complemented in $X_{n+1}$, then we shall assume that $W_n=C_n$ and $\gamma_n(X_n\times W_n)=X_{n+1}$.

On each space $C_n$ fix a uniformity that generates the topology of $C_n$ and observe that the map $\gamma_n$ determines a regular homeomorphism of the pairs $(X_n\times W_n,X_n\times\{*_{C_n}\})$ and $(\gamma_n(X_n\times W_n),X_n)$.

Let $U_0=X_0$ and for every $n\in\w$  define an open subset $U_{n+1}\subset X_{n+1}$  by the recursive formula $U_{n+1}=\gamma_n(U_n\times W_n)$.

Let $h_0=\gamma_0:X_0\times W_0\to U_1$. For every $n\in\IN$ define a homeomorphism $h_n:X_0\times\cbox_{i\le n} W_i\to U_{n+1}$ by the recursive formula $h_n(x,c)=\gamma_n(h_{n-1}(x),c)$ where $x\in X_0\times \cbox_{i<n}W_i$ and $c\in W_n$. It follows that $h_n|X_0\times\cbox_{i<n}W_i=h_{n-1}$ and $$h_n:(X_0\times\cbox_{i\le n}W_i,X_0\times \cbox_{i<n}W_i)\to (U_{n+1},U_n)$$is a regular homeomorphism of pairs. By Corollary~\ref{c2.1} and Proposition~\ref{p2.3},  the bijective map $$\mbox{$h=\bigcup_{n\in\w}h_n:\bigcup_{n\in\w}(X_0\times \cbox_{i\le n}W_i)\to \bigcup_{n\in\w}U_n$}$$ is a homeomorphism between the small box-product $X_0\times\cbox_{n\in\w} W_k$ and the uniform direct limit $U=\ulim U_n$.

We claim that $U=\ulim U_n$ is an open subspace of $\ulim X_n$. First we show that the set $U=\bigcup_{n\in\w}U_n$ is open in $X=\ulim X_n$.

Given any point $x\in U$, find $n\in\w$ such that $x\in U_n$. Since the set $U_n$ is open in the uniform space $X_n$, there is an entourage $\e_n\in\U_{X_n}$ such that $B(x,2\e_n)\subset U_n$, where $2\e_n=\e_n\circ\e_n$. Let $B_n=B(x,\e_n)$.

\begin{claim}\label{cl1n} There is a sequence $(\e_k)_{k>n}\in\prod_{k>n}\U_{X_k}$ of entourages such that for every $k>n$ for the set $B_k=B(B_{k-1},\e_k)$ we have the inclusion $B(B_k,\e_k)\subset U_k$.
\end{claim}

\begin{proof} For $k=n$ the inclusion $B(B_n,\e_n)=B(x,2\e_n)\subset U_n$ follows from the choice of $\e_n$. Assume that for some $k\ge n$ we have constructed an entourage $\e_{k}\in\U_{X_k}$ such that $B(B_{k},\e_{k})\subset U_{k}$.

Choose an entourage $\delta_{k+1}\in\U_{X_{k+1}}$ such that $X_k^2\cap 3\delta_{k+1}\subset\e_k$. By the condition $(\Gamma_2)$ there is a neighborhood $V_{k}\subset C_k$ of the distinguished point $*_{C_k}$ such that for every $c\in V_k$ and $x_k\in X_k$ we get $\gamma_k(x_k,c)\in B(x_k,\delta_{k+1})$. By the condition $(\Gamma_1)$, there is  an entourage $\e_{k+1}\in\U_{X_{k+1}}$ such that $B(X_k,2\e_{k+1})\subset \gamma_k(X_k\times V_k)$ and $\e_{k+1}\subset\delta_{k+1}$. Let us check that the entourage $\e_{k+1}$ satisfies our requirements.

Consider the set $B_{k+1}=B(B_k,\e_{k+1})$ and its $\e_{k+1}$-neighborhood $B(B_{k+1},\e_{k+1})=B(B_{k},2\e_{k+1})$. We need to show that $B(B_k,2\e_{k+1})\subset U_{k+1}$. Take any point $y\in B(B_k,2\e_{k+1})\subset B(X_k,2\e_{k+1})\subset\gamma_k(X_k\times V_k)$
and find a pair $(x_k,c)\in X_k\times V_k$ such that $y=\gamma_k(x_k,c)$.
The choice of the neighborhood $V_k$ guarantees that $(y,x_k)\in \delta_{k+1}$. Then $x_k\in X_k\cap B(y,\delta_{k+1})\subset X_k\cap (B_k,\delta_{k+1}\circ 2\e_{k+1})\subset X_k\cap B(B_k,3\delta_{k+1})=B(B_k,X_k^2\cap 3\delta_{k+1})\subset B(B_k,\e_k)\subset U_k$ and hence $y=\gamma_k(x_k,c)\in \gamma_k(U_k\times W_k)=U_{k+1}$.
\end{proof}

Theorem~1.1 of \cite{BR10} guarantees that the union $B_\infty=\bigcup_{k\ge n}B_k$ is a neighborhood of the point $x$ is $\ulim X_n$. Since $B_\infty=\bigcup_{k\ge n}B_k\subset\bigcup_{k\ge n}U_k=U$, we see that the point $x$ lies in the interior of $U$ and hence the set $U$ is open in $\ulim X_n$. Claim~\ref{cl1n} and the description of the topology of uniform direct limits given in \cite[1.1]{BR10} guarantee that the topology of the uniform direct limit on $\ulim  U_n$ coincides with the subspace topology inherited from $\ulim X_n$.
This completes the proof of the lemma.
\end{proof}

If each subset $X_n$ is $C_n$-complemented in $X_{n+1}$, then $W_n=C_n$ and $\gamma_n(X_n\times C_n)=X_{n+1}$ for all $n\in\w$. By induction we can prove that $U_n=X_n$ for all $n\in\w$ and hence the uniform direct limit $\ulim X_n=\ulim U_n$ is homeomorphic to $X_0\times \cbox_{n\in\w}C_n$.
This argument yields the following version of Lemma~\ref{l1n}.

\begin{lemma}\label{l2n} Let  $(X_n)_{n\in\w}$ be a tower of uniform spaces such that for every $n\in\w$ the space $X_n$ is  $C_n$-complemented in $X_{n+1}$ for some pointed topological space $C_n$. Then the uniform direct limit  $X=\ulim X_n$ is homeomorphic to the small box-product $X_0\times \cbox_{n\in\w}C_n$.
\end{lemma}

Our final lemma completes the proof of Theorem~\ref{t:udl-cbox}.

\begin{lemma} Let $(X_n)_{n\in\w}$ be a tower of uniform spaces such that for every $n\in\w$ the space $X_n$ is locally $C_n$-complemented in $X_{n+1}$ for some pointed space $C_n$ and is locally homeomorphic to $X_0\times\prod_{i<n}C_i$. Then the space $\ulim X_n$ is locally homeomorphic to  $X_0\times \cbox_{n\in\w}C_n$.
\end{lemma}

\begin{proof} Given any point $x\in \ulim X_n$, find a number $n\in\w$ with $x\in X_n$. By Lemma~\ref{l1n}, the point $x$ has an open neighborhood  $O(x)$ that is homeomorphic to the small box-product $X_n\times \cbox_{i\ge n}W_i$ for some open neighborhoods $W_i\subset C_i$ of the distinguished points $*_{C_i}$. Since the space $X_n$ is locally homeomorphic to $X_0\times\prod_{i<n}C_i$, we conclude that $X_n\times\cbox_{i\ge n}W_i$ is locally homeomorphic to $X_0\times\prod_{i<n}C_i\times \cbox_{i\ge n}C_i=X_0\times\cbox_{i\in\w}C_i$. Consequently, $x$ has an open neighborhood, homeomorphic to an open subset of $X_0\times\cbox_{i\in\w}C_i$ witnessing that $X$ is locally homeomorphic to $X_0\times\cbox_{i\in\w}C_i$.
\end{proof}

\section{Reduced products}\label{s:rp}

In this section we collect the necessary information on reduced products of pointed spaces. This information will be used in the proofs of Theorems~\ref{t:sbox} and \ref{t:tsbox}.

For a pointed space $C$ with distinguished point $*_C$ we denote by $C^\circ=C\setminus\{*_C\}$ its complement in $C$.

The {\em reduced product} $C\rtimes E$ of a pointed topological space $C$ and a topological space $E$ is the space $$(C^\circ\times E)\cup\{*_C\}$$
endowed with the smallest topology such that the identity inclusion $C^\circ\times E\to C\rtimes E$ is an open topological embedding and the natural projection $\pr:C\rtimes E\to C$ is continuous. The reduced product $C\rtimes E$ is a pointed space with the distinguished point $*_{C\rtimes E}=*_C$.

If $C$ and $E$ are uniform spaces, then their reduced product $C\rtimes E$ carries the smallest uniformity such that the projection $C\rtimes E\to C$ is uniformly continuous and for every closed subspace $F\subset C^\circ$ of $C$ the embedding $F\times E\to C\rtimes E$ is a uniform homeomorphism.

A map $f:X\to Y$ between topological spaces is called a {\em near homeomorphism} if for any open cover $\U$ of $Y$ there is a homeomorphism $h:X\to Y$ that is {\em $\U$-near} to $f$.

\begin{lemma}\label{l:rp} Let $M,N,E$ be metric spaces and $C$ be a pointed metric space. If the projection $\pr:M\times C^\circ\times E\to M\times C^\circ$, $\pr:(x,y,z)\mapsto (x,y)$, is a near homeomorphism, then for any homeomorphism $f:M\to N$ there is a regular homeomorphism of pairs
$$\bar f:(M\times (C\rtimes E),M\times\{*_{C\rtimes E}\})\to (N\times C,N\times\{*_{C}\})$$such that $\bar f(x,*_{C\rtimes E})=(f(x),*_C)$ for all $x\in M$.
\end{lemma}

\begin{proof} Let $d_N$ and $d_C$ be the metrics of the metric spaces $N$ and $C$, respectively. These metrics determine the metric
$$d\big((x,y),(x',y')\big)=\max\{d_N(x,x'),d_C(y,y')\}$$on the product $N\times C$. Since the projection $\pr:M\times C^\circ\times E\to M\times C^\circ$ is a near homeomorphism and $M$ is homeomorphic to $N$, there exists a homeomorphism
$h:N\times C^\circ\times E\to N\times C^\circ$ such that
$$d(h(x,c,e),\pr(x,c,e))\le \frac13d_C(c,*_C)\mbox{ \ for all \ $(x,c,e)\in N\times C^\circ\times E.$}$$
Extend $h$ to a homeomorphism $\bar h:N\times(C\rtimes E)\to N\times C$ by letting $\bar h|N\times C^\circ=h$ and $\bar h|N\times\{*_C\}=\id$.

The homeomorphism $f:M\to N$ induces a homeomorphism $f\times\id:M\times (C\rtimes E)\to N\times (C\rtimes E)$, $f\times\id:(x,y)\mapsto (f(x),y)$.

Now consider the homeomorphism $\bar f=\bar h\circ (f\times\id):M\times (C\rtimes E)\to N\times C$ and observe that for each $(x,c,e)\in M\times C^\circ \times E\subset M\times (C\rtimes E)$ we get
$$\frac23 d_C(c,*_C)\le d\big(\bar h(f(x),c,e),N\times\{*_C\}\big)\le d\big(\bar h(f(x),c,e),(f(x),*_C)\big)\le \frac43d_C(c,*_C),$$
which implies that $$\bar h:(M\times (C\rtimes E),M\times\{*_{C\rtimes E}\})\to (N\times C,N\times\{*_C\})$$ is a regular homeomorphism of pairs.
\end{proof}

\section{Typical model spaces}\label{s:tms}

In fact, Theorem \ref{t:sbox} holds in a more general setting with LF-spaces replaced by small box-products of typical model spaces.

\begin{definition}\label{d:tms} A pointed topological space $E$ is called a {\em typical model space} if
\begin{enumerate}
\item $E$ is a topologically homogeneous absolute retract containing a topological copy of the Hilbert cube $Q=[0,1]^\w$;
\item For any neighborhood $U\subset E$ of $*_E$ there are neighborhoods $V,W\subset U$ of $*_E$ such that $W$ and $E\setminus V$ are homeomorphic to $E$ and the boundary $\partial V$ of $V$ is a retract of $\overline{V}$ and a $Z$-set in $E\setminus V$;
\item each contractible $E$-manifold is homeomorphic to $E$;
\item each connected $E$-manifold $M$ is homeomorphic to an open subset of $E$;
\item any homeomorphism $h:A\to B$ between $Z$-sets $A,B\subset E$ extends to a homeomorphism $\bar h:E\to E$ of $E$;
\item for any $E$-manifold $M$ the projection $E\times M\to M$ is a near homeomorphism;
\item for any retract $X$ of an open subset of $E$ the product $X\times E$ is homeomorphic to an open subset of $E$;
\item for any retract $X$ of an $E$-manifold and a strong $Z$-point $*_X\in X$ the reduced product $X\rtimes E$ is an $E$-manifold, homeomorphic to   $X\times E$.
\end{enumerate}
\end{definition}

By an {\em ANR-space} we understand a metrizable space $X$, which is a neighborhood retract in each metric space that contains  $X$ as a closed subspace.

The theory of Hilbert manifolds developed in \cite{HS}, \cite[\S IX.7]{BP}, \cite{Tor74}, \cite{Tor81}, \cite{Tor85} yields the following theorem.

\begin{theorem} Any infinite-dimensional Hilbert space is a typical model space.
\end{theorem}

\begin{remark} Many incomplete typical model spaces can be found among absorbing and coabsorbing spaces, see \cite{BM} and \cite{BRZ}.
\end{remark}

We finish this short section by a lemma that will be used in the proof of Theorem~\ref{t:tsbox}.

\begin{lemma}\label{l:Z} Let $X$ be a pointed ANR-space and $Y$ be a pointed topological space. If $*_X$ is a strong $Z$-point in $X$, then $(*_X,*_Y)$ is a strong $Z$-point in $X\times Y$.
\end{lemma}

\begin{proof} Given an open cover $\U$ of $X\times Y$, find a set $U\in\U$ that contains the point $(*_X,*_Y)$. Choose open sets  $U_X\subset X$ and $U_Y\subset Y$ such that $(*_X,*_Y)\in U_X\times U_Y\subset U$ and find a neighborhood $V_X\subset X$ of $*_X$ such that $\overline{V}_X\subset U_X$.
Since the space $Y$ is completely regular, there is a continuous function $\lambda_Y:Y\to[0,1]$ such that $\lambda_Y^{-1}(0)\supset Y\setminus U_Y$ and $\lambda_Y^{-1}(1)$ is a neighborhood of $*_Y$.
By the same reason, there is a continuous function $\lambda_X:X\to[0,1]$ such that $\lambda_X^{-1}(0)\supset X\setminus V_X$ and $\lambda^{-1}_X(1)$ is a neighborhood of $*_X$.
 Let $\Lambda_X$ be the interior of $\lambda_X^{-1}(1)$ in $X$ and $W_X$ be a neighborhood of $*_X$ in $X$ such that $\overline{W}_X\subset\Lambda_X$.
Consider the open cover $\V=\{\Lambda_X,U_X\setminus \overline{W}_X,X\setminus \overline{V}_X\}$ of $X$. Since $X$ is an ANR, there is an open cover $\mathcal{W}$ of $X$ such that any two $\mathcal W$-near maps into $X$ are $\V$-homotopic. Since $*_X$ is a strong $Z$-point, there is a map $f:X\to X$ such that $f$ is $\mathcal W$-near to $\id_X$ and $*_X\notin\overline{f(X)}$. By the choice of the cover $\mathcal W$, the map $f$ is $\V$-homotopic to $\id_X$. Consequently, there is a homotopy $h:X\times [0,1]\to X$ such that for every $x\in X$ we get
$h(x,0)=x$, $h(x,1)=f(x)$ and $h(\{x\}\times[0,1])\subset V_x$ for some $V_x\in \V$.

Now consider the function $\lambda:X\times Y\to[0,1]$, $\lambda:(x,y)\mapsto\lambda_X(x)\cdot\lambda_Y(y)$, and the map $$g:X\times Y\to X\times Y,\;\;g:(x,y)\mapsto \big(h(x,\lambda(x,y)),y\big).$$

\begin{claim} The map $g$ is $\U$-near to $\id_{X\times Y}$.
\end{claim}

\begin{proof}
Take any pair $(x,y)\in X\times Y$. If $(x,y)\notin \overline{V}_X\times U_Y$, then $$g(x,y)=\big(h(x,\lambda(x,y)),y\big)=\big(h(x,0),y\big)=(x,y)$$and hence the singleton $\{g(x,y),(x,y)\}$ lies in some element of the cover $\U$. Next, assume that $(x,y)\in \overline{V}_X\times U_Y$.
Since $h$ is a $\V$-homotopy, and $h(x,0)=x\notin X\setminus\overline{V}_X$, $h(\{x\}\times[0,1])\subset U_X$. Then $$g(x,y)=\big(h(x,\lambda(x,y)),y)\in U_X\times U_Y\subset U\in\U$$ and $(x,y)\in\overline{V}_X\times U_Y\subset U_X\times U_Y\in U\in\U$.
\end{proof}

Consider the neighborhood $W_X'=W_X\cap (X\setminus \overline{f(X)})$ of $*_X$ and the neighborhood $W=W_X'\times\lambda_Y^{-1}(1)$ of $(*_X,*_Y)$ in $X\times Y$.

\begin{claim} The neighborhood $W$ does not intersect $g(X\times Y)$.
\end{claim}

\begin{proof} Fix any point $(x,y)\in X\times Y$. If $y\notin\lambda^{-1}(1)$, then $g(x,y)\in X\times\{y\}\subset X\times (Y\setminus \lambda^{-1}(1))\subset (X\times Y)\setminus W$.

So, we assume that $y\in\lambda^{-1}(1)$. If $x\notin\overline{V}_X$, then $g(x,y)=(x,y)\notin W$. If $x=h(x,0)\in \overline{V}_X\setminus \Lambda_X$, then $h(\{x\}\times[0,1])\subset U_X\setminus \overline{W_X}$ as $h$ is a $\V$-homotopy. In this case $g(x,y)\in (X\setminus \overline{W}_X)\times Y\subset X\times Y\setminus W$.

If $x\in\Lambda_X$, then $\lambda(x,y)=\lambda_X(x)\cdot\lambda_Y(y)=1$ and  $g(x,y)=(f(x),y)\notin W$.
\end{proof}
Thus the map $g$ witnesses that $(*_X,*_Y)$ is a strong $Z$-point in $X\times Y$.
\end{proof}

\section{Complemented subsets in typical model spaces}\label{s:comp}

In this section we prove two useful lemmas about complemented subsets in metrizable uniform spaces that are locally homeomorphic to typical model spaces.

\begin{lemma}\label{l:rcomp} Let $C$ be a pointed topological space and $A$ be a $C$-complemented subset of a metrizable uniform space $X$. If $X$ is an $E$-manifold for some typical model space $E$, then $A$ is $C\rtimes E$-complemented in $X$.
\end{lemma}

\begin{proof}
Since $A$ is $C$-complemented in $X$, there is a homeomorphism $\gamma:A\times C\to X$ satisfying the conditions (1) and (2) of Definition~\ref{d:comp}. By our hypothesis, the uniform space $X$ is metrizable and hence
its uniformity is generated by some bounded metric $\rho_X$.
Since $X$ is homeomorphic to the product $A\times C$, the space $C$ is metrizable, so we can choose a metric $\rho_C\le 1$ that generates the topology of $C$. Denote by $\U_C$ the uniformity on $C$ generated by the metric $\rho_C$.

By induction construct a sequence of entourages $(U_n)_{n\in\w}\in\U_X^\w$ and $(V_n)_{n\in\w}\in\U_C^\w$ such that $U_0=X\times X$, $V_0=C\times C$ and for every $n\in\IN$ the following conditions are satisfied:
\begin{enumerate}
\item $B(A,U_n)\subset \gamma(A\times B(*_C,V_{n-1}))$;
\item $\gamma(a,c)\in B(a,U_n)$ for each $a\in A$ and $c\in B(*_C,V_{n})$;
\item $U_n\circ U_n\circ U_n\subset U_{n-1}=U_{n-1}^{-1}\subset \{(x,x')\in X^2:\rho_X(x,x')\le2^{-n+1}\}$;
\item $V_n\circ V_n\circ V_n\subset V_{n-1}=V_{n-1}^{-1}\subset \{(c,c')\in C^2:\rho_C(c,c')\le 2^{-n+1}\}$.
\end{enumerate}
By Theorem~\cite[8.1.10]{En}, there are pseudometrics $d_X$ and $d_C$ on $X$ and $C$, respectively, such that for every $n\in\w$
\begin{enumerate}
\item[(5)] $\{(x,x')\in X^2:d_X(x,x')<2^{-n}\}\subset U_n\subset\{(x,x')\in X^2:d_X(x,x')\le 2^{-n}\}$ and
\item[(6)] $\{(c,c')\in C^2:d_C(c,c')<2^{-n}\}\subset V_n\subset\{(c,c')\in C^2:d_C(c,c')\le 2^{-n}\}$.
\end{enumerate}
 The conditions (3), (5) and (4), (6) imply that $d_X$ and $d_C$ are metrics generating the uniformities of the corresponding spaces.

For $\e>0$ let $B_X(A,\e)=\{x\in X:d_X(x,A)<\e\}$ and $B_C(*_C,\e)=\{c\in C:d_C(c,*_C)<\e\}$.
The conditions (1), (2) and (5), (6) imply that for every $\e>0$ the following two conditions are satisfied:
\begin{enumerate}
\item[(7)] $B(A,\e)\subset \gamma(A\times B(*_C,4\e))$;
\item[(8)] $\gamma(a,c)\in B(a,4\e)$ for each $a\in A$ and $c\in B(*_C,\e)$;
\end{enumerate}

The metrics $d_X$ and $d_C$ generate the metric
$$d_{AC}((a,c),(a',c'))=\max\{d_X(a,a'),d_C(c,c')\}$$on the product $A\times C$.

Let $C^\circ=C\setminus\{*_C\}$. The space $A\times C^\circ$ is an $E$-manifold (being homeomorphic to the open subset $\gamma(A\times C^\circ)$ of the $E$-manifold $X$).
Consequently, the projection $p:A\times C^\circ\times E\to A\times C^\circ$ is a near homeomorphism and we can choose a homeomorphism $h:A\times C^\circ \times E\to A\times C^\circ$ such that
$$d_{AC}(h(a,c,e),p(a,c,e))\le \frac13d_C(c,*_C) \mbox{ \ and \  }d_X(\gamma\circ h(a,c,e),\gamma\circ p(a,c,e))\le\frac 12d_C(c,*_C)$$for all $(a,c,e)\in A\times C^\circ \times E$.

Extend $h$ to a homeomorphism $\bar h:A\times C\rtimes E\to A\times C$ letting $\bar h|A\times C^\circ \times E=h$ and $\bar h|A\times\{*_C\}=\id$.

It can be shown that the homeomorphism $\tilde \gamma=\gamma\circ\bar h:A\times C\rtimes E\to X$ witnesses that $A$ is $C\rtimes E$-complemented in $X$.
\end{proof}

Our second lemma reduces the local $z$-complementability in uniform spaces homeomorphic to typical model spaces to the $E$-complementability.

 \begin{lemma}\label{l:zcomp} Let $A$ be a retract of a metrizable uniform space $X$ such that $X$ is homeomorphic to some typical model space $E$. If $A$ is locally $z$-complemented in $X$, then $A$ is $E$-complemented in $X$.
\end{lemma}

\begin{proof} Assuming that $A$ is locally $z$-complemented in $X$, find a $z$-pointed space $C$, an open neighborhood $V\subset C$ of $*_C$ and an open uniform neighborhood $U\subset X$ of $X$ such that $A$ is $V$-complemented in $U$. It follows that $A\times V$ is homeomorphic to $U$ and hence $A\times V$ and $V$ are ANRs. The ANR-property of $V$ can be used to show that the distinguished point $*_C$ is a strong $Z$-point not only in $C$ but also in $V$. So, we lose no generality assuming that $V=C$. By Lemma~\ref{l:rcomp}, the set $A$ is $C\rtimes E$-complemented in $U$. By the condition (8) of Definition~\ref{d:tms}, the reduced product $C\rtimes E$ is an $E$-manifold. Consequently, the distinguished point $*_{C\rtimes E}$ has a neighborhood in $C\rtimes E$, homeomorphic an open neighborhood $U\subset E$ of the distinguished point $*_E$ of $E$.
It follows that the set $A$ is locally $U$-complemented in $X$. Hence, we can find an open embedding $\gamma:A\times U\to X$ satisfying the condition (1), (2) of Definition~\ref{d:comp}.
By the condition (2) of Definition~\ref{d:tms} the distinguished point $*_E$ of $U$ has a neighborhood $V\subset E$ such that $\overline{V}\subset U$, the complement $E\setminus V$ is homeomorphic to $E$ and the boundary $\partial V=\overline{V}\setminus V$ of $V$ is a retract of $\overline{V}$ and a $Z$-set in $E\setminus V$.

Since $A$ is a retract of the space $X$ and the spaces $X$ and $E\setminus V$ are homeomorphic to $E$, the product $A\times (E\setminus V)$ is homeomorphic to $E$, being a contractible $E$-manifold. Since $\partial V$ is a $Z$-set in $E\setminus V$, the product $A\times\partial V$ is a $Z$-set in $A\times(E\setminus V)$.

We claim that the complement $M=X\setminus\gamma(A\times V)$ is homeomorphic to $E$ and $\gamma(A\times\partial V)$ is a $Z$-set in $M$.
First we show that $M$ is contractible. Since $\partial V$ is a retract of $\overline{V}$, $A\times\partial V$ is a retract of $A\times\overline{V}$ and hence $\gamma(A\times\partial V)$ is a retract of $\gamma(A\times\overline{V})$. Then $M$ is a retract of $X=M\cup\gamma(A\times\overline{V})$. Since $X$ is contractible, so is its retract $M$. To see that $M$ is an $E$-manifold, observe that $M$ is the union of two open subsets $X\setminus\gamma(A\times \overline{V})$ and $\gamma(A\times (U\setminus V))$, the first of which is open in the $E$-manifold $X$ while the second is a topological copy of the $E$-manifold $A\times (U\setminus V)\subset A\times(E\setminus V)$. Being a contractible $E$-manifold, the space $M$ is homeomorphic to $E$.

Since $\partial V$ is a $Z$-set in $E\setminus V$, it is a $Z$-set in $U\setminus V$. Then $A\times\partial V$ is a $Z$-set in $A\times(U\setminus V)$ and $\gamma(A\times \partial V)$ is a $Z$-set in the open subset $\gamma(A\times(U\setminus V))$ of $M$ and hence a $Z$-set in $M$.

Then $\gamma|A\times\partial V$ is a homeomorphism between the $Z$-sets $A\times\partial V$ and $\gamma(A\times\partial V)$ is the spaces $A\times(E\setminus V)$ and $M$ which are homeomorphic to $E$. By the condition (5) of Definition~\ref{d:tms}, there is a homeomorphism $h:A\times (E\setminus V)\to M$ such that $h(x)=\gamma(x)$ for all $x\in A\times\partial V$. Extend the homeomorphism $h$ to a homeomorphism $\tilde\gamma:A\times E\to X$ letting  $\tilde \gamma|A\times \overline{V}=\gamma|A\times \overline{V}$ and $\tilde \gamma|A\times (E\setminus V)=h$. The homeomorphism $\tilde \gamma$ witnesses that the set $A$ is $E$-complemented in $X$.
\end{proof}

\section{Small box-products of locally compact spaces}\label{s:sbplc}

In this section we study the topological structure of small box products of pointed locally compact ANR-spaces. By $Q=[0,1]^\w$ we denote the Hilbert cube.  A pointed space $X$ is called {\em non-isolated} if its distinguished point $*_X$ is not isolated in $X$. By a {\em polyhedron} we understand a topological space, homeomorphic to the geometric realization of some simplicial complex.

The following theorem is the main result of this section.

\begin{theorem}\label{t:sboxlc} For any sequence $(X_n)_{n\in\IN}$ of non-isolated pointed
locally compact ANR-spaces the small box-product $Q\times\cbox_{n\in\IN}X_n$ is homeomorphic to the product $K\times Q\times\IR^\infty$ for some locally compact polyhedron $K$. If each space $X_n$, $n\in\IN$, is contractible, then the small box-product $Q\times \cbox_{n\in\IN}X_n$ is homeomorphic to $Q\times\IR^\infty$.
\end{theorem}

\begin{proof} Put $X_0=Q$ and instead of the product $Q\times\cbox_{n\in\IN}X_n$ consider the small box-product $\cbox_{n\in\w}X_n$. In order to prove that $\cbox_{n\in\w}X_n$ is a $Q\times\IR^\infty$-manifold, we shall apply Sakai's characterization \cite{Sak84} of open subspaces of $Q\times\IR^\infty$, mentioned in the Introduction.

We can assume that each space $X_n$ carries a uniformity that generates its topology. Since each ANR-space is locally connected, it suffices to prove the theorem in the case of connected locally compact ANR-spaces $X_n$.
 In this case each space $X_n$ is  $\sigma$-compact and so is each finite product $\cbox_{i<n}X_i$. Then
$\big(\cbox_{i\le n}X_i\big)$ is a tower of locally compact $\sigma$-compact uniform spaces. By Proposition 5.4 and 5.5 of \cite{BR10} the identity maps
$$\tlim \cbox_{i\le n}X_i\to\ulim \cbox_{i\le n}X_i\to \cbox_{n\in\w}X_n$$are homeomorphisms.

Taking into account that each finite product $\cbox_{i\le n}X_i$ is locally compact and $\sigma$-compact, we can show that the topological direct limit $\tlim \cbox_{i\le n}X_i$ is a $k_\w$-space, which means that it can be written as a topological direct limit of a tower of compact metrizable spaces. Now the Sakai's characterization \cite{Sak84} will imply that $\cbox_{n\in\w}X_i$ is homeomorphic to an open subset of the space $Q\times\IR^\infty$ as soon as we show that each embedding $f:B\to \cbox_{n\in\w}X_i$ defined on a closed subset $B$ of a compact metrizable space $A$ extends to an embedding $\bar f:O(B)\to \cbox_{n\in\w}X_i$ of some neighborhood $O(B)$ of $B$ in $A$.

Since $f(B)$ is a compact subset of the topological direct limit $\tlim \cbox_{i\le n}X_i$, there is $n\in\IN$ such that $f(B)\subset\cbox_{i< n}X_i$. Since $\cbox_{i< n}X_i$ is an ANR, the map $f:B\to \cbox_{i< n}X_i$ admits a continuous extension $\bar f:O(B)\to \cbox_{i< n}X_i$ to some closed neighborhood $O(B)$ of $B$ in $A$.

By the ANR-Theorem for Q-manifolds \cite[44.1]{Chap}, the product $\cbox_{i< n}X_i=Q\times\cbox_{1{\le} i{<}n}X_i$ is a $Q$-manifold and so is the product $[0,1]\times\cbox_{i< n}X_i$. Identify $\cbox_{i< n}X_i$ with the $Z$-set $\{0\}\times\cbox_{i< n}X_i$ in $[0,1]\times\cbox_{i\le n}X_i$. By Theorem~18.2 of \cite{Chap}, the map $\bar f:O(B)\to \cbox_{i< n}X_i$ can be approximated by an embedding $\tilde f:O(B)\to [0,1]\times\cbox_{i< n}X_i$ such that $\tilde f|B=f$.

Since  $X_n$ is a non-isolated pointed space, there is an embedding $\gamma:[0,1]\to X_n$ such that $\gamma(0)=*_X$. The embedding $\gamma$ induces the embedding $$\tilde\gamma:[0,1]\times\cbox_{i<n}X_i\to\cbox_{i\le n}X_i,\;\;\tilde\gamma:(t,\vec x)\mapsto (\vec x,\gamma(t)).$$
Then $g=\tilde\gamma\circ\tilde f:O(B)\to\cbox_{i\le n}X_i\subset\cbox_{i\in\w}X_i$ is a required embedding that extends the embedding $f$. By Sakai's characterization of $Q\times\IR^\infty$-manifolds \cite{Sak84}, $\cbox_{i\in\w}X_i$ is a $Q\times\IR^\infty$-manifold and by the Triangulation Theorem \cite{Sak84} for $Q\times\IR^\infty$-manifolds, the $Q\times\IR^\infty$-manifold $\cbox_{i\in\w}X_i$ is homeomorphic to $K\times Q\times\IR^\infty$ for some locally compact polyhedron $K$.
\smallskip

If each space $X_n$ is contractible, the product $\cbox_{i<n}X_i$ is an absolute retract. In this case,  we can assume that $O(B)=A$ and then the embedding $f:B\to \cbox_{i\in\w}X_i$ extends to an embedding $\bar f:A\to\cbox_{i\in\w}X_i$. By Sakai's characterization \cite{Sak84} of the space $Q\times\IR^\infty$, the $k_\omega$-space $\cbox_{i\in\w}X_i$ is homeomorphic to $Q\times\IR^\infty$.
\end{proof}

\section{The topological structure of some small box-products}\label{s:tsbp}

In this section we prove a ``typical'' version of Theorem~\ref{t:sbox}.

\begin{theorem}\label{t:tsbox} Let $(X_n)_{n\in\w}$ be a sequence of  pointed topological spaces such that for every $n\in\w$ the finite product $\prod_{i\le n}X_n$ is homeomorphic to (an open subspace of) some typical model space $E_n$. Assume that for infinitely many numbers $n\in\w$ the space $X_n$ is $\lz$-pointed. Then the small box-product $\cbox_{n\in\w}X_n$ is homeomorphic to (an open subset of) the small box-product $\cbox_{n\in\w}E_n$.
\end{theorem}

\begin{proof} Let
$$L=\{n\in\IN:X_n\mbox{ is $l$-pointed}\}\mbox{ \ and \ }Z=\{n\in\IN:X_n\mbox{ is $z$-pointed}\}.$$
 Assume that for
every $n\in\w$ the finite product $\prod_{i\le n}X_n$ is homeomorphic to an open subspace of some typical model space $E_n$. Then each space $X_n$ is metrizable, so its topology is generated by some metrizable uniformity $\U_{X_n}$.

\begin{claim}\label{cl6.2} For every $n\in\IN$ the product $E_{n-1}\times E_n$ is homeomorphic to $E_{n}$.
\end{claim}

\begin{proof} By our assumption, the product $\prod_{i\le n}X_i$ is locally homeomorphic to the model space $E_{n}$. Consequently, there are non-empty open sets $U\subset \prod_{i<n}X_i$ and $V\subset X_n$ whose product $U\times V$ is homeomorphic to an open subset of $E_n$. Since $\prod_{i<n}X_i$ is locally homeomorphic to $E_{n-1}$, the open set $U$ contains a nonempty open set $W$ that is homeomorphic to an open subset of the model space $E_{n-1}$. Since each non-empty open set of $E_{n-1}$ contains an open subset homeomorphic to $E_{n-1}$ we lose no generality assuming that the set $W$ is homeomorphic to $E_{n-1}$. Then $W\times V$ is homeomorphic to an open subset of $E_n$ and hence $E_{n-1}$ is homeomorphic to the retract $W$ of the $E_n$-manifold $W\times V$. By the conditions (3) and (7) of Definition~\ref{d:tms}, the product $E_{n-1}\times E_n$, being a contractible $E_n$-manifold, is homeomorphic to $E_n$.
\end{proof}

\begin{claim}\label{cl6.3} The small box-product $\cbox_{n\in\w}X_n$ is homeomorphic to $\cbox_{n\in\w}X_n\times \cbox_{n\in Z}E_n$.
\end{claim}

\begin{proof} Let $Y_0=X_0$ and for every $n\in\IN$ let $$Y_n=\begin{cases} X_n&\mbox{if $n\notin Z$},\\
X_n\rtimes E_n&\mbox{if $n\in Z$}.
\end{cases}$$
By Proposition~5.5 of \cite{BR10}, the small box products $\cbox_{n\in\w}X_n$ and $\cbox_{n\in\w}Y_n$ can be identified with the uniform direct limits of the towers $\big(\cbox_{i<n}X_i\big)_{n\in\w}$ and  $\big(\cbox_{i<n}Y_i\big)_{n\in\w}$, respectively.

For every $n\in Z$ let $X_n^\circ=X_n\setminus\{*_{X_n}\}$.
By our assumption, the finite product $\cbox_{i\le n}X_i$ is homeomorphic to an open subset of the typical model space $E_n$. Then the space $X_n^\circ\times\cbox_{i<n}X_i$, being an open subset of $\cbox_{i\le n}X_i$ also is homeomorphic to an open subset of $E_n$. Since $E_n$ is a typical model space, the projection
$$\pr:E_n\times X_n^\circ\times \cbox_{i<n}X_i\to X_n^\circ\times \cbox_{i<n}X_i,\;\;\pr:(e,x,\vec x)\mapsto (x,\vec x),$$is a near homeomorphism.

Let $h_0:X_0\to Y_0$ be the identity homeomorphism. Using Lemma~\ref{l:rp},  by induction we can construct a sequence of regular homeomorphisms of pairs $$h_n:(\cbox_{i\le n}X_i,\cbox_{i<n}X_i)\to
(\cbox_{i\le n}Y_i,\cbox_{i<n}Y_i)$$such that $h_n|\cbox_{i<n}X_i=h_{n-1}$. By Corollary~\ref{c2.1}, the map $h:\cbox_{n\in\w}X_n\to \cbox_{n\in\IN}Y_n$ defined by $h|\cbox_{i\le n}X_i=h_n$ is a homeomorphism.

By the condition (8) of Definition~\ref{d:tms}, for every $n\in Z$ the reduced product $X_n\rtimes E_n$ is homeomorphic to $X_n\times E_n$. Consequently, we get the following chain of homeomorphisms:
$$
\begin{aligned}
\cbox_{n\in\w}X_n&\cong\cbox_{n\in\w}Y_n=\cbox_{n\in\w\setminus Z}Y_n\times\cbox_{n\in Z}Y_n=\cbox_{n\in\w\setminus Z}X_n\times\cbox_{n\in Z}(X_n\rtimes E_n)\cong\\
&\cong \cbox_{n\in\w\setminus Z}X_n\times \cbox_{n\in\w}(X_n\times E_n)=\cbox_{n\in\w}X_n\times\cbox_{n\in Z}E_n.
\end{aligned}$$
\end{proof}

\begin{claim}\label{cl6.4} If the set $Z$ is infinite, then the small box product $\cbox_{n\in\w}X_n$ is homeomorphic to an open subspace of the small box-product $\cbox_{n\in \w}E_n$.
\end{claim}

\begin{proof} Let $Z=\{n_k:k\in\w\}$ be the increasing enumeration of the infinite set $Z$. It will be convenient to assume that $n_{-1}=-1$.
For every $k\in\w$ let $Y_k=\prod\limits_{n_{k-1}<i\le n_k}X_{i}$. By Claim~\ref{cl6.3}, the small box-product $\cbox_{n\in\w}X_n$ is homeomorphic to $\cbox_{k\in\w}(Y_{k}\times E_{n_k})$. Since the finite product $\prod\limits_{i\le n_k}X_i$ is homeomorphic to an open subset of $E_{n_{k}}$, the space  $Y_{k}$ is a retract of an open subset of the typical model space $E_{n_k}$ and hence  the product $Y_k\times E_{n_k}$ is homeomorphic to an open subset $U_{n_k}$ of $E_{n_k}$. If the space $Y_k$ is contractible, then $U_{n_k}$, being a contractible $E_{n_k}$-manifold, is homeomorphic to $E_{n_k}$. In this case we can assume that $U_{n_k}=E_{n_k}$.
Claim~\ref{cl6.2} implies that the product $\prod\limits_{n_{k-1}<i\le n_k}E_{i}$ is homeomorphic to $E_{n_k}$ and hence the open set $U_{n_k}$ is homeomorphic to some open set $W_{n_k}$ in $\prod_{n_{k-1}<i\le n_k}E_{i}$ (which coincides with $\prod_{n_{k-1}<i\le n_k}E_i$ if $U_{n_k}=E_{n_k}$).
The space $Y_k\times E_{n_k}$, being  homeomorphic to an open subset $U_{n_k}$ of $E_{n_k}$, is homeomorphic to the open subset $W_{n_k}$ of the product $\prod\limits_{n_{k-1}<i\le n_k}E_i$.

Now we see that the small box product $\cbox_{n\in\w}X_n$ is homeomorphic to the small box-product $\cbox_{k\in\w}(Y_k\times E_{n_k})$ and the latter small box-product is homeomorphic to the small box-product $W=\cbox_{k\in\w}W_{n_k}$, which is an open subset of the small box-product $\cbox_{n\in\w}E_n$. This finishes the proof of Claim~\ref{cl6.4}.
\end{proof}

If each finite product $\prod_{i\le n}X_n$ is homeomorphic to $E_n$, then each space $X_i$, $i\in\w$, is contractible and so are the spaces $Y_k$, $k\in\w$. In this case $U_k=E_{n_k}$ and $W=\cbox_{n\in\w}E_n$. Therefore we have proved the following modification of Claim~\ref{cl6.4}.

\begin{claim}\label{cl6.5} If the set $Z$ is infinite, and each finite product $\prod_{i\le n}X_i$ is homeomorphic to the model space $E_n$, then the small box product $\cbox_{n\in\w}X_n$ is homeomorphic to  $\cbox_{n\in \w}E_n$.
\end{claim}

Claims~\ref{cl6.4} and \ref{cl6.5} prove Theorem~\ref{t:tsbox} in case of infinite set $Z$. If the set $L$ is infinite, then Theorem~\ref{t:tsbox} follows from Claims~\ref{cl6.7} and \ref{cl6.8} proved below.

\begin{claim}\label{cl6.6} If the set $L$ is infinite and each space $X_n$, $n\in\w$, is connected, then the small box product $\cbox_{n\in\w}X_n$ is homeomorphic to an open subspace of  $\cbox_{n\in \w}E_n$.
\end{claim}

\begin{proof} Let $A\subset L$ be an infinite subset such that for each $n\in A$ we get $0<n-1\notin A$. This condition implies that the complement $\IN\setminus A$ is infinite.

By Theorem~\ref{t:sboxlc}, the small box-product $Q\times\cbox_{n\in A}X_n$ is homeomorphic to $K\times Q\times\IR^\infty$ for some connected locally compact polyhedron $K$. If each space $X_n$ is contractible, then we can assume that $K$ is a singleton.
Theorem~\ref{t:sboxlc} also implies that the space $Q\times\IR^\infty$ is homeomorphic to the small box-product $Q\times \cbox_{n\in\w}\II$ where $\II=[0,1]$ is the closed interval with the distinguished point $0$.

First we show that for every $n\in\w$ the product $K\times E_n$ is homeomorphic to an open subset of $E_n$. Indeed, by the condition (1), (3) and (7) of Definition~\ref{d:tms}, the Hilbert cube $Q$ is a retract of the typical model space $E_n$ and the product $E_n\times Q\times[0,1]$ is homeomorphic to $E_n$.
The locally compact polyhedron $K$ is connected and hence admits a closed embedding into $Q\times[0,1)$.
Then $K$ is a neighborhood retract of the space $E_n\times Q\times[0,1)$, which is homeomorphic to an open subspace of $E_n$. By the condition (7) of Definition~\ref{d:tms}, $E_n\times K$ is homeomorphic to an open subset of $E_n$. By Definition~\ref{d:tms}(7), for every open subset $U\subset E_n$ the product $U\times E_n$ is homeomorphic to $U$ and hence $U\times K$ is homeomorphic to $U\times E_n\times K$ and the latter space is homeomorphic to an open subset of the square $E_n\times E_n$, which is homeomorphic to $E_n$ (being a contractible $E_n$-manifold).

Since $X_0$ is homeomorphic to an open subset of $E_0$, the product $X_0\times K$ is homeomorphic to an open subset of the model space $E_0$.
Since $X_0$ is homeomorphic to $X_0\times E_0$ and $E_0$ is homeomorphic to $E_0\times Q$, the space $X_0$ is homeomorphic to $X_0\times Q$. So, we get the following chain of homeomorphisms
$$
\begin{aligned}
\cbox_{n\in\w}X_n&\cong X_0\times Q\times\cbox_{n\in\IN}X_n\cong X_0\times \cbox_{n\in\IN\setminus A}X_n\times (Q\times\cbox_{n\in A}X_n)\cong\\
&\cong X_0\times \cbox_{n\in\IN\setminus A}X_n\times (K\times Q\times \cbox_{n\in\w}\II)\cong X_0\times K\times Q\times\cbox_{n\in\IN\setminus A}(X_n\times \II)\cong\\
&\cong X_0\times K\times\cbox_{n\in\IN\setminus A}(X_n\times \II).
\end{aligned}
$$
By Lemma~\ref{l:Z}, for every $n\in\IN\setminus A$ the distinguished point $(*_{X_n},0)$ of the pointed space $X_n\times \II$ is a strong $Z$-point.

 Then by Claim~\ref{cl6.4}, the small box-product $\cbox_{n\in\w}X_n\cong K\times X_0\times \cbox_{n\in\IN\setminus A}(X_n\times \II)$ is homeomorphic to an open subset of $\cbox_{n\in\w\setminus A}E_n$. By Claim~\ref{cl6.2}, for every $n\in\IN\setminus A$ the space $E_n$ is homeomorphic to $E_n\times E_{n-1}$.
Consequently, the small box-product $\cbox_{n\in\w\setminus A}E_n$ is homeomorphic to $\cbox_{n\in\w}E_n$ and thus $\cbox_{n\in\w}X_n$ is homeomorphic to an open subspace of $\cbox_{n\in\w}E_n$.
\end{proof}

By analogy we can prove:

\begin{claim}\label{cl6.7} If the set $L$ is infinite, and every finite product $\prod_{i\le n}X_n$ is contractible, then the small box product $\cbox_{n\in\w}X_n$ is homeomorphic to  $\cbox_{n\in \w}E_n$.
\end{claim}

Our final claim finishes the proof of Theorem~\ref{t:tsbox}.

\begin{claim}\label{cl6.8} If the set $L$ is infinite, then the small box product $\cbox_{n\in\w}X_n$ is homeomorphic to an open subspace of the small box-product $\cbox_{n\in \w}E_n$.
\end{claim}

\begin{proof} For every $n\in\w$ denote by $\kappa_n$ the number of connected components of the space $X_n$. It is clear that the finite product $\prod_{i\le n}X_i$ has $\prod_{i\le n}\kappa_i$ many connected components. Since $\prod_{i\le n}X_i$ is homeomorphic to an open subset of $E_n$, the model space $E_n$ contains a family $\U_n$ consisting of $\prod_{i\le n}\kappa_i$ many pairwise disjoint non-empty open subsets. Since each non-empty open subset of $E_n$ contains an open subset homeomorphic to $E_n$, we can assume that each set $U\in\U_n$ is homeomorphic to $E_n$. Since $E_n$ is topologically homogeneous, we can assume that its distinguished point lies in the union $U_n=\cup\U_n$. It follows that $U=\cbox_{n\in\w}U_n$ is an open subset of $\cbox_{n\in\w}E_n$ and each connected component of $U$ is homeomorphic to $\cbox_{n\in\w}E_n$. Observe that the spaces $\cbox_{n\in\w}X_n$ and $\cbox_{n\in\w}U_n$ consist of $\kappa=\sup_{n\in\w}\prod_{i\le n}\kappa_i$ many connected components. So, we can choose a bijective map $\gamma$ assigning to each connected component of $\cbox_{n\in\w}X_n$ a connected component of the space $\cbox_{n\in\w}U_n$. By Claim~\ref{cl6.6}, each connected component $C$ of $\cbox_{n\in\w}X_n$ is homeomorphic to an open subset of $\cbox_{n\in\w}E_n$. So, we can define an open topological embedding $f_C:C\to\gamma(C)$ of $C$ into the connected component $\gamma(C)$ of the small box-product $\cbox_{n\in\w}U_n$. Then the union $f=\bigcup_Cf_C:\cbox_{n\in\w}X_n\to\cbox_{n\in\w}U_n\subset\cbox_{n\in\w}E_n$ is a required open embedding of $\cbox_{n\in\w}X_n$ into $\cbox_{n\in\w}E_n$.
\end{proof}
\end{proof}

\section{Recognizing the topology of some uniform direct limits}\label{s:tudl}

In this section we prove a ``typical'' version of Theorem~\ref{t:udl}.

\begin{theorem}\label{t:tudl} Let $(E_n)_{n\in\w}$ be a sequence of typical model spaces. The uniform direct limit $\ulim X_n$ of a tower of uniform spaces $(X_n)_{n\in\w}$ is
\begin{enumerate}
\item homeomorphic to (an open subset of) $\cbox_{n\in\w}E_n$ if each space $X_n$ is $\lz$-complemented in $X_{n+1}$ and $X_n$ is homeomorphic to (an open subset of) the model space $E_n$;
\item (locally) homeomorphic to $\cbox_{n\in\w}E_n$ if each space $X_n$ (locally) $\lz$-complemented in $X_{n+1}$ and $X_n$ is (locally) homeomorphic to $E_n$;
\item homeomorphic to $\cbox_{n\in\w}E_n$ if each uniform space $X_n$ is metrizable, homeomorphic to $E_n$ and is locally $z$-complemented in $X_{n+1}$.
\end{enumerate}
\end{theorem}

\begin{proof} 1. Assume that for every $n\in\w$ the space $X_n$ is $\lz$-complemented in $X_{n+1}$ and is homeomorphic to (an open subset of) the model space $E_{n+1}$. Then $X_n$ is $C_n$-complemented in $X_{n+1}$ for some $\lz$-pointed space $C_n$.
By Theorem~\ref{t:udl-cbox}, the uniform direct limit $\ulim X_n$ is homeomorphic to the small box-product $X_0\times\cbox_{n\in\w}C_n$.

The $C_n$-complementedness of $X_n$ in $X_{n+1}$ implies that the space $X_{n+1}$ is homeomorphic to the product $X_n\times C_n$. Continuing by induction we can prove that $X_{n+1}$ is homeomorphic to $X_0\times\prod_{i\le n}C_n$. Then the latter product is homeomorphic to (an open  subset of) $E_{n+1}$ and we can apply Theorem~\ref{t:tsbox} to prove that $X_0\times \cbox_{n\in\w}X_n$ is homeomorphic to (an open subset of) the small box-product $\cbox_{n\in\w}E_n$.
\smallskip

2. Now assume that for every for every $n\in\w$ the space $X_n$ is locally $\lz$-complemented in $X_{n+1}$ and is locally homeomorphic to $E_n$. We need to show that each point $x\in\ulim X_n$ has an open neighborhood homeomorphic to an open subset of $\cbox_{n\in\w}E_n$.
Find a number $n\in\w$ with $x\in X_n$. By Lemma~\ref{l1n}, the point $x$ has an open neighborhood which is homeomorphic to the small box-product $X_n\times\cbox_{i\ge n}W_i$ for some open neighborhoods $W_i\subset C_i$ of the distinguished points $*_{C_i}$. It follows from the proof of Lemma~\ref{l1n} that for every $m\ge n$ the product $X_n\times \prod_{n\le i\le m}W_i$ is homeomorphic to an open subset of $X_{n+1}$ and hence is locally homeomorphic to $E_{n+1}$. Then Theorem~\ref{t:tsbox} guarantees that the small box-product $X_n\times\cbox_{i\ge n}W_i$ is locally homeomorphic to $\cbox_{i\ge n}E_i$. Consequently, the point $x$ has a neighborhood $O(x)$ that is homeomorphic to an open subset of $\cbox_{i\ge n}E_i$.

Repeating the argument of Claim~\ref{cl6.2}, we can prove that for every $k\in\IN$ the model space $E_k$ is homeomorphic to $E_{k-1}\times E_k$. Then $E_{n-1}$ is homeomorphic to $\prod_{i<n}E_i$ and $\cbox_{i\ge n}E_i$ is homeomorphic to $\cbox_{n\in\w}E_n$. Now we see that $O(x_0)$ is homeomorphic to an open subset of $\cbox_{n\in\w}E_n$ and hence the uniform direct limit $\ulim X_n$ is locally homeomorphic to $\cbox_{n\in\w}E_n$.
\smallskip

3. Assume that each uniform space $X_n$ is metrizable, homeomorphic to $E_n$ and is locally $z$-complemented in $X_{n+1}$. By Lemma~\ref{l:zcomp}, the set $X_n$ is $E_{n+1}$-complemented in $X_{n+1}$ and by the statement (1), $X=\ulim X_n$ is homeomorphic to the small box-product $\cbox_{n\in\w}E_n$.
\end{proof}

\section{Proof of Theorem~\ref{t:sbox}}\label{s:pfsbox}

Let $(X_n)_{n\in\w}$ be a sequence of pointed topological spaces such that for every $n\in\w$ the finite product $\prod_{i\le n}X_i$ is homeomorphic to (an open subset of) a Hilbert space $E_n$ and for infinitely many numbers $n\in\w$ the space $X_n$ is $\lz$-pointed.

We consider three cases.
\smallskip

I. For some $m\in\w$ the Hilbert space $E_m$ is infinite-dimensional.
Then for all $n\ge m$ the Hilbert spaces $E_n$ are infinite-dimensional. In this case we can apply Theorem~\ref{t:tsbox} to conclude that the small-box product $\cbox_{n>m}X_n$ is homeomorphic to (an open subset of)  $\cbox_{n>m}E_n$. Since the product $\prod_{i\le m}X_i$
is homeomorphic to (an open subset of) the Hilbert space $E_{m}$, the small box-product $\cbox_{n\in\w}X_n$ is homeomorphic to (an open subset of) the small box-product $\cbox_{n\ge m}E_n$ of Hilbert spaces. The latter box product can be identified with the LF-space $\bigoplus\limits_{n\ge m}E_n$.
\smallskip

II. All Hilbert spaces $E_n$ are finite-dimensional and $\sup_{n\in\w}\dim(E_n)=\infty$. In this case for every $n\in\w$ the finite product $\prod_{i\le n}X_n$, being homeomorphic to an open subset of the finite-dimensional Hilbert space $E_n$, is a locally compact $\sigma$-compact finite-dimensional ANR.
Then the topological direct limit $\tlim \cbox_{i\le m}X_i$ of the tower $\big(\cbox_{i\le n}X_i\big)_{n\in\w}$ can be written as the topological direct limit of a tower of finite-dimensional metrizable compacta.
By Propositions 5.4 and 5.5 of \cite{BR10}, the identity map
$$\tlim \cbox_{i\le n}X_n\to \cbox_{n\in\w}X_n$$ is a homeomorphism.
Now by Theorem~\ref{sakai}, the topological equivalence of $\cbox_{n\in\w}X_n$ to (an open subset of) the LF-space $\IR^\infty$ will follow as soon as we prove that each embedding $f:B\to \cbox_{n\in\w}X_n$ of a closed subset $B$ of a finite-dimensional metrizable compact space $A$ can be extended to an embedding $\bar f$ of (some neighborhood of $B$ in) the space $A$.

Since $f(B)$ is a compact subset of the topological direct limit $\cbox_{i\in\w}X_i=\tlim \cbox_{i\le n}X_i$, there is $n\in\w$ such that $f(B)\subset\cbox_{i\le n}X_i$. Since $\cbox_{i\le n}X_i$ is an ANR-space, the map $f$ admits a continuous extension $\bar f:O(B)\to \cbox_{i\le n}X_i$ defined on a closed neighborhood $O(B)$ in $A$. Since $\cbox_{i\le n}X_i$ is homeomorphic to the Hilbert space $E_n$, then it is an absolute retract and we can additionally assume that $O(B)=A$.

Now consider the quotient space $O(B)/B$ and the corresponding quotient map $\pi:O(B)\to O(B)/B$. Being metrizable and finite-dimensional, the compact space $O(B)/B$ admits an embedding $e:O(B)/B\to \II^k$ for some $k$ such that the distinguished point $B$ of $O(B)/B$ maps onto the distinguished point $(0,\dots,0)$ of the cube $\II^k$. Then the map
$$\tilde f:O(B)\to \II^k\times \prod_{i\le n}X_i,\;\;\tilde f:x\mapsto (e\circ\pi(x),\bar f(x))$$ is a topological embedding.

Consider the set $M$ of all numbers $m$ for which the point $*_{X_m}$ is not isolated in the ANR-space $X_n$. It follows from $\sup_{m\in\w}\dim(E_m)=\infty$ that the set $N$ is infinite. For every $m\in M$ we can find an embedding $\gamma_m:\II\to X_m$ such that $\gamma_m(0)=*_{X_m}$. Since the set $M$ is infinite, we can choose a sequence of numbers $m_1<m_2<\dots<m_k$ in $M$ such that $m_1>n$. Consider the embedding $\gamma:\II^k\to\prod_{i=1}^k X_{m_k}$ defined by $\gamma:(t_1,\dots,t_k)\mapsto (\gamma_{m_1}(t_1),\dots,\gamma_{m_k}(t_k))$.

Identify the product $\prod_{i=1}^kX_{m_i}$ with the subset
$$\{(x_i)_{i=n+1}^{m_k}\in\prod_{i=n+1}^{m_k}X_i: i\notin\{m_1,\dots,m_k\}\;\Ra\;x_i=*_{X_i}\}$$of the product $\prod_{i=n+1}^{m_k}X_i$ and consider the embedding
$$\delta:\Big(\prod_{i\le n}X_i\Big)\times\II^k\to \prod_{i\le m_k}X_i=\prod_{i\le n}X_i\times\prod_{i=n+1}^{m_k}X_i,\;\;\delta:(x,t)\mapsto \big(x,\gamma(t)\big)
.$$ Then the composition $\delta\circ \tilde f:O(B)\to \prod_{i\le m_k}X_i\subset\cbox_{i\in\w}X_i$ is a required embedding of $O(B)$ that extends the embedding $f$. Now it is legal to apply Theorem~\ref{sakai} and conclude that the small box-product $\cbox_{i\in\w}X_i$ is homeomorphic to (an open subspace of) the LF-space $\IR^\infty$.
\smallskip

III. $k=\sup_{n\in\w}\dim(E_n)$ is finite. Then there is $m\in\w$ such that $\dim(E_n)=k$ for all $n\ge m$. For every $n<m$ the finite products $\cbox_{i<n}X_i$ and $\cbox_{i\le n}X_i$ are homeomorphic to open subsets of the Euclidean space $\IR^k$. By the Brouwer Domain Preservation Principle \cite{Brouwer}, the space $\cbox_{i<n}X_i$ is open in $\cbox_{i\le n}X_i$. This implies that the space $X_n$ is discrete and at most countable. Then the small box-product $\cbox_{n>m}X_n$ is discrete and at most countable. Since the product $\cbox_{i\le m}X_i$ is homeomorphic to an open subset of $\IR^k$ and $\IR^k$ contains an open subspace homeomorphic to $\IR^k\times\w$, the small box-product $\cbox_{i\in\w}X_i=\cbox_{i\le m}X_i\times\cbox_{i>m}X_i$ is homeomorphic to an open subset of $E_m$.

If each finite product $\prod_{i\le n}X_i$ is homeomorphic to the Hilbert space $E_n$, then for every $n>m$ the space $X_n$ is a singleton. Consequently, the small box-product $\cbox_{n\in\w}X_i=\cbox_{i\le m}X_i$ is homeomorphic to the finite-dimensional LF-space $E_m$.

\section{Acknowledgment}

The authors would like to express their thanks to Katsuro Sakai and to the anonymous referee (who noticed a gap in the original version of the paper) for valuable suggestions and remarks.

\end{document}